\titleformat{\section}{\normalsize\bfseries}{\thesection.}{1em}{}
\titlespacing*{\section}{0pt}{11pt plus 2pt minus 2pt}{11pt plus 2pt minus 2pt}
\newcommand{\ra}{\rightarrow}
\newcommand{\CC}{\mathbf{C}}
\newcommand{\ZZ}{\mathbf{Z}}
\newcommand{\QQ}{\mathbf{Q}}
\newcommand{\PP}{\mathbf{P}}
\newcommand{\A}{\mathbf{A}}
\newcommand{\Lotimes}{\otimes^{\mathbf{L}}}
\renewenvironment{proof}[1][\proofname]{\par
  \normalfont
  \topsep6\p@\@plus6\p@ \trivlist
  \item[\hskip\labelsep\itshape
    #1\@addpunct{.\enspace\textemdash}]\ignorespaces
}{%
  \qed\endtrivlist
}
\def\@secnumfont{\bfseries}
\newtheoremstyle{bfth}% ⟨name ⟩
{}% ⟨Space above ⟩ If these arguments are left empty, the current defaults are used.
{}% ⟨Space below ⟩ If these arguments are left empty, the current defaults are used.
{\itshape}% ⟨Body font ⟩
{\parindent}% ⟨Indent amount: empty = no indent, \parindent = normal paragraph indent⟩
{\itshape}% ⟨Theorem head font: \itshape, \bfseries etc.⟩
{\textbf{.}\hspace{.5em}\textemdash}% ⟨Punctuation after theorem head ⟩
{.5em}% ⟨Space after theorem head: { } = normal interword space; \newline = line break⟩
{{\thmname{#1}\upshape\thmnumber{ \textbf{(#2)}}\thmnote{ (\textit{#3})}}}% ⟨Theorem head spec (can be left empty, meaning ‘normal’)⟩
\newtheoremstyle{bfthstar}% ⟨name ⟩
{}% ⟨Space above ⟩ If these arguments are left empty, the current defaults are used.
{}% ⟨Space below ⟩ If these arguments are left empty, the current defaults are used.
{\itshape}% ⟨Body font ⟩
{\parindent}% ⟨Indent amount: empty = no indent, \parindent = normal paragraph indent⟩
{\itshape}% ⟨Theorem head font: \itshape, \bfseries etc.⟩
{.\hspace{.5em}\textemdash}% ⟨Punctuation after theorem head ⟩
{.5em}% ⟨Space after theorem head: { } = normal interword space; \newline = line break⟩
{{\thmname{#1}\upshape\thmnumber{ \textbf{(#2)}}\thmnote{ (\textit{#3})}}}% ⟨Theorem head spec (can be left empty, meaning ‘normal’)⟩
\newtheoremstyle{bfrm}% ⟨name ⟩
{}% ⟨Space above ⟩ If these arguments are left empty, the current defaults are used.
{}% ⟨Space below ⟩ If these arguments are left empty, the current defaults are used.
{}% ⟨Body font ⟩
{\parindent}% ⟨Indent amount: empty = no indent, \parindent = normal paragraph indent⟩
{\itshape}% ⟨Theorem head font: \itshape, \bfseries etc.⟩
{\textbf{.}\hspace{.5em}\textemdash}% ⟨Punctuation after theorem head ⟩
{.5em}% ⟨Space after theorem head: { } = normal interword space; \newline = line break⟩
{{\thmname{#1}\upshape\thmnumber{ \textbf{(#2)}}\thmnote{ (\textit{#3})}}}% ⟨Theorem head spec (can be left empty, meaning ‘normal’)⟩
\theoremstyle{bfth}
\newtheorem{theorem}{Theorem}[section]
\newtheorem{lemma}[theorem]{Lemma}
\newtheorem{proposition}[theorem]{Proposition}
\newtheorem{proposition-definition}[theorem]{Proposition-Definition}
\theoremstyle{bfrm}
\newtheorem{remark}[theorem]{Remark}
\newtheorem{remarks}[theorem]{Remarks}
\theoremstyle{bfthstar}
\newtheorem*{theorem*}{Theorem}
\newtheorem*{corollary*}{Corollary}
\newtheorem*{lemma*}{Lemma}
\newtheorem*{proposition*}{Proposition}
\newtheorem*{proposition-definition*}{Proposition-Definition}
\theoremstyle{definition}
\newtheorem*{definition*}{Definition}
\DeclareMathOperator{\id}{id}
\DeclareMathOperator{\Fr}{Fr}
\DeclareMathOperator{\GL}{GL}
\newcommand{\lra}{\longrightarrow}
\newcommand{\minus}{\smallsetminus}
\long\def\comment#1{}
\newcommand*\dotp{{\mathpalette\dotp@{.5}}}
\newcommand*\dotp@[2]{\mathbin{\vcenter{\hbox{\scalebox{#2}{$\m@th#1\bullet$}}}}}
\newcommand{\leqnomode}{\tagsleft@true}
\newcommand{\reqnomode}{\tagsleft@false}
\title{The derived category of the abelian category of constructible sheaves}
\author{Owen Barrett}
\date{May 21, 2020}
\begin{document}
\thispagestyle{plain}
\maketitle

\section{Introduction}\label{sec:intro}
Let $X/k$ be an algebraic variety (i.e. a separated scheme of finite type)
over a base field $k$. Let $p$ be the characteristic of $k$ and $\ell$ a
prime number, $\ell\ne p$.
Denote by $D(X)=D(X,\Lambda)$ the usual triangulated category of bounded
constructible complexes of $\Lambda$-sheaves on $X$;
here the coefficient ring $\Lambda$ is either finite with $\ell$ nilpotent
in it, or, in the $\ell$-adic setting, $\Lambda$ is either a finite 
extension $E$ of $\QQ_\ell$ or its ring of integers $R_E$.
\begin{theorem}\label{th:derived}
	Suppose $k$ is algebraically closed. Then $D(X,\Lambda)$ is equivalent
	to the bounded derived category of the abelian category of
	constructible $\Lambda$-sheaves.
\end{theorem}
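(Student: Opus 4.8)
The plan is to build an explicit comparison functor and then reduce the theorem, in stages, to a statement about $\Ext$-groups of constructible sheaves that can be attacked geometrically. Since $\mathrm{Cons}(X,\Lambda)$ is the heart of the standard (bounded) $t$-structure on $D(X,\Lambda)$ and $D(X,\Lambda)$ admits a filtered enhancement, Beilinson's formalism of filtered triangulated categories produces a $t$-exact realization functor
\[
\real\colon D^b\bigl(\mathrm{Cons}(X,\Lambda)\bigr)\longrightarrow D(X,\Lambda)
\]
that is the identity on hearts. I would prove that $\real$ is fully faithful; essential surjectivity then follows formally, for once $\real$ is fully faithful its essential image is a triangulated subcategory containing the heart, hence all of $D(X,\Lambda)$, every object of which is a finite iterated extension of shifts of its (constructible) cohomology sheaves.

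By d\'{e}vissage along the $t$-structure — a double induction on the cohomological amplitudes of the two arguments, using the truncation triangles and the five lemma for the long exact $\Hom$-sequences — full faithfulness of $\real$ is equivalent to the assertion that, for all constructible sheaves $F,G$ and all $n$, the canonical map $\Ext^n_{\mathrm{Cons}(X,\Lambda)}(F,G)\to\Hom_{D(X,\Lambda)}(F,G[n])$ is an isomorphism. This is automatic in low degrees: both sides vanish for $n<0$, both equal $\Hom_{\mathrm{Cons}(X,\Lambda)}(F,G)$ for $n=0$, and for $n=1$ both classify Yoneda extensions, which coincide since an extension of constructible sheaves is again constructible. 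So the content lies in degrees $n\ge 2$; by a standard criterion one may even reduce to surjectivity in degree $2$, i.e.\ to showing that every morphism $F\to G[2]$ in $D(X,\Lambda)$ factors through $H[1]$ for some constructible $H$ — though the argument below is uniform in $n$.

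For the $\Ext$-comparison I would run Noetherian induction on $X$ by recollement. Choose a dense open $j\colon U\hookrightarrow X$ that is smooth over $k$ — possible since $k$ is perfect — and over which both $F$ and $G$ are lisse, with closed complement $i\colon Z\hookrightarrow X$. The short exact sequences $0\to j_!j^*F\to F\to i_*i^*F\to 0$ and its analogue for $G$ in $\mathrm{Cons}(X,\Lambda)$, together with the adjunctions $(j_!,j^*)$, $(i^*,i_*)$, $(i_*,i^!)$ and the recollement identities $j^*i_*=0$, $i^*j_!=0$, $i^!j_!=0$, reduce the comparison on $X$ to: (a) the comparison on $U$ for the lisse sheaves $j^*F,j^*G$; (b) the comparison on the lower-dimensional $Z$, which is the inductive hypothesis; and (c) the vanishing of the "gluing" $\Ext$-groups between objects extended from $U$ and from $Z$, which one checks holds identically in $\mathrm{Cons}(X,\Lambda)$ and in $D(X,\Lambda)$ by those same adjunctions. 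In (a), tensoring with the lisse dual of $F$ — an exact autoequivalence of $\mathrm{Cons}(U,\Lambda)$ compatible with the derived tensor product — reduces further to $F=\Lambda$, leaving the statement $\Ext^n_{\mathrm{Cons}(U,\Lambda)}(\Lambda,G)\cong H^n(U,G)$ for $G$ constructible on a smooth $U/k$. I would settle this by passing to an affine stratification of $U$ and invoking Nori's basic lemma to produce a \emph{bounded} resolution of $\Lambda$ inside $\mathrm{Cons}(U,\Lambda)$ by sheaves relative to which Yoneda $\Ext$ and derived $\Ext$ visibly agree, the boundedness resting on Artin vanishing and the finiteness of the $\ell$-cohomological dimension of varieties over an algebraically closed field.

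The main obstacle is precisely this last circle of ideas: producing, for an arbitrary constructible sheaf on an arbitrary variety over $k$, a resolution within $\mathrm{Cons}$ that is simultaneously bounded and $\Ext$-acyclic, and organizing the recollement so that the gluing terms cancel already at the abelian level rather than only after applying $\real$. Conceptually one is proving a "smallness" property of constructible sheaves inside all $\Lambda$-sheaves — that passing to $D(X,\Lambda)$ introduces no morphisms beyond those assembled by Yoneda from constructible sheaves. This is where the hypothesis that $k$ is algebraically closed is indispensable: over a field of infinite $\ell$-cohomological dimension the statement already fails for $X=\Spec k$, and the resolutions above would not terminate. Once the realization functor and the standard $t$-structure are available, the argument is otherwise uniform across the listed coefficient rings $\Lambda$.
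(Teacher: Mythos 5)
Your opening moves (the $t$-exact realization functor, the d\'evissage of full faithfulness to the comparison $\Ext^n_{\mathrm{Cons}(X,\Lambda)}(F,G)\to\Hom_{D(X,\Lambda)}(F,G[n])$, and its triviality for $n\leq 1$) are sound and are the standard frame in which Nori's deduction also lives. The first genuine gap is in your recollement step: the identity $i^!j_!=0$ is false (the correct vanishings are $j^*i_*=0$, $i^*j_!=0$, $i^!j_*=0$), and with it your claim (c) that the gluing $\Ext$-terms vanish. One has $\Hom_{D(X)}(i_*A,j_!B[n])=\Hom_{D(Z)}(A,i^!j_!B[n])$ with $i^!j_!B\simeq i^*j_*B[-1]$, the cohomology of the punctured neighborhood of $Z$, which is almost never zero; correspondingly, on the abelian side $0\to j_!\Lambda_{\GG_m}\to\Lambda_{\A^1}\to i_{0*}\Lambda\to0$ is a non-split extension, so $\Ext^1_{\mathrm{Cons}}(i_*A,j_!B)\neq0$ already in the simplest case. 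The comparison therefore does not decouple into your (a) and (b): the cross terms encode exactly the local cohomology of a lisse sheaf along the boundary (hence its ramification there), and matching them against Yoneda $\Ext$'s of constructible sheaves is the substance of the theorem, not a formal consequence of adjunction.

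The second and more fundamental gap is the last step. What your argument needs is a bounded right resolution of a constructible sheaf by constructible sheaves acyclic for $\Gamma$, equivalently (by iteration and cohomological-dimension bounds) that every constructible sheaf embeds into a constructible sheaf with vanishing higher cohomology, at least on $\A^n$. The Beilinson--Nori ``basic lemma'' does not supply this, and this embedding statement is precisely theorem~\ref{th:NoriAn}, i.e.\ the entire difficulty. In characteristic $p>0$ its proof requires controlling wild ramification at infinity along the fibers of $\A^n\to\A^{n-1}$, which is what the paper does via the quadratic automorphism and Achinger's theorem on singular support (theorem~\ref{th:AchingerAn}, lemma~\ref{lem:Nori1.3A}, proposition~\ref{prop:Nori2.2}); your proposal never engages with this point and, as written, would at best go through in characteristic $0$ --- and even there only after replacing the appeal to the basic lemma by Nori's actual construction. (A minor further point: I do not believe surjectivity of the comparison map in degree $2$ alone implies it in all degrees; the standard effaceability criterion must be verified for every $n\geq1$, though you note you do not rely on that reduction.)
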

For $\Lambda$ finite, the theorem is valid for any $k$ and follows 
from~\cite[VI 5.8, IX 2.9\,(iii)]{SGA4}.
In the $\ell$-adic setting it was proved in Nori's remarkable article~\cite{Nori} under the assumption that $p=0$.
The aim of this note is to remove that assumption.
\begin{remarks}
\begin{enumerate}[label=(\roman*)]
\item	In his paper, Nori embeds $k$ into $\CC$ and considers the
corresponding Betti version of constructible sheaves with arbitrary 
coefficient rings $R$.
If $R=R_E$ then these are the same as étale $R_E$-sheaves.
\item Nori’s theorem 3\,(b) asserts that~\eqref{th:derived} above is true when
$R$ is a field. But his theorem 3\,(a) implies that~\eqref{th:derived} is true
for $R=R_E$ as well since torsion free constructible sheaves generate
$D(X,R_E)$ as a triangulated 
category.\footnote{To see this, note that every constructible sheaf is an
extension of a torsion free sheaf by a torsion sheaf, and every torsion 
sheaf admits a finite filtration with successive quotients of the form 
$j_!\mathcal F$, where $\mathcal F$ is a locally constant torsion sheaf 
on a locally closed subvariety $j: Y\hookrightarrow X$. It remains to find a 
surjection $\alpha:\mathcal G\twoheadrightarrow j_!\mathcal F$ where
$\mathcal G$ is torsion free
(its kernel $\mathcal K$ is torsion free then as well, and
$j_!\mathcal F=\operatorname{Cone}(\mathcal K\ra\mathcal G)$).
Let $p: T\ra Y$ be a finite etale covering such that $p^*\mathcal F$ is
constant. Pick a surjection $(R_E)^m_T\twoheadrightarrow p^*\mathcal F$.
The promised $\alpha$ is the composition
$j_!p_*(R_E)^m_T\twoheadrightarrow j_!p_*p^*\mathcal F\twoheadrightarrow j_!\mathcal F$.}
\end{enumerate}
\end{remarks}
Nori deduces theorem~\ref{th:derived} from the following
theorem~\ref{th:NoriAn} by an argument that works for arbitrary $p$.
\begin{theorem}\label{th:NoriAn}
	Suppose $k$ is algebraically closed.
	Then every constructible $\Lambda$-sheaf on $\A^n$
	is a subsheaf of a constructible sheaf $\mathcal G$ with
	$H^q(\A^n,\mathcal G)=0$ for $q>0$.
\end{theorem}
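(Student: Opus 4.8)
My plan is to induct on $n$. The case $n=0$ is trivial, and the case $n=1$ is the clean core. Given a constructible $\mathcal F$ on $\A^1$, Artin vanishing gives $H^q(\A^1,\mathcal F)=0$ for $q\ge 2$, and constructibility makes $H^1(\A^1,\mathcal F)$ a finitely generated $\Lambda$-module; I choose generators $e_1,\dots,e_N$ of $H^1(\A^1,\mathcal F)=\Ext^1_{\A^1}(\Lambda_{\A^1},\mathcal F)$ and let $\mathcal G$ be the extension $0\to\mathcal F\to\mathcal G\to\Lambda_{\A^1}^{\oplus N}\to 0$ they define. Then $\mathcal F\hookrightarrow\mathcal G$, the connecting map $\Lambda^{\oplus N}=H^0(\A^1,\Lambda_{\A^1}^{\oplus N})\to H^1(\A^1,\mathcal F)$ is onto (it sends the $i$th standard section to $e_i$), and $H^1(\A^1,\Lambda_{\A^1})=0$, so $H^1(\A^1,\mathcal G)=0$ and $\mathcal G$ is acyclic. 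The only feature of $\A^1$ used is that the constant sheaf on affine space is acyclic, $\widetilde H^\bullet(\A^m,\Lambda)=0$. (In the $\ell$-adic cases one can run the same argument over $\Lambda=E$ or $R_E$ directly, or first reduce to finite $\Lambda$ as in Remark~(ii).)

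For the inductive step, $n\ge 2$, fix the coordinate projection $\pi\colon\A^n\to\A^{n-1}$. Its fibres are affine lines, so $R^q\pi_*=0$ for $q>1$, and $\pi_*\mathcal G$, $R^1\pi_*\mathcal G$ are constructible (Deligne finiteness). From the Leray spectral sequence one reads off that a constructible $\mathcal G$ on $\A^n$ is acyclic provided (i) $R^1\pi_*\mathcal G=0$ and (ii) $\pi_*\mathcal G$ is acyclic on $\A^{n-1}$. I claim it suffices to produce an embedding $\mathcal F\hookrightarrow\mathcal G_1$ with $R^1\pi_*\mathcal G_1=0$: then $\pi_*\mathcal G_1$ is constructible on $\A^{n-1}$, hence by the inductive hypothesis embeds in an acyclic $\mathcal H$, and I take the pushout $\mathcal G:=\mathcal G_1\oplus_{\pi^*\pi_*\mathcal G_1}\pi^*\mathcal H$ of the counit $\pi^*\pi_*\mathcal G_1\to\mathcal G_1$ along $\pi^*$ of $\pi_*\mathcal G_1\hookrightarrow\mathcal H$. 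Since pullback is exact the second leg is a monomorphism, so $\mathcal G_1\hookrightarrow\mathcal G$; and since $R\pi_*\pi^*\mathcal H=\mathcal H$ and $R\pi_*\pi^*(\pi_*\mathcal G_1)=\pi_*\mathcal G_1$ (homotopy invariance of the $\A^1$-bundle $\pi$), applying $R\pi_*$ to the defining sequence $0\to\pi^*\pi_*\mathcal G_1\to\mathcal G_1\oplus\pi^*\mathcal H\to\mathcal G\to 0$ yields $R^1\pi_*\mathcal G=0$ and $\pi_*\mathcal G\cong\mathcal H$. Thus $\mathcal G$ is acyclic on $\A^n$ and contains $\mathcal F$.

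So the real task is to embed $\mathcal F$ into some $\mathcal G_1$ on $\A^n$ with $R^1\pi_*\mathcal G_1=0$, i.e. with $\mathcal G_1$ acyclic along every fibre of $\pi$ — a relative form of the $n=1$ construction, which I would attack the same way: pick a surjection $\phi\colon\mathcal M\twoheadrightarrow R^1\pi_*\mathcal F$ from a constructible $\mathcal M$ on $\A^{n-1}$ and realize it by an extension $0\to\mathcal F\to\mathcal G_1\to\pi^*\mathcal M\to 0$; applying $R\pi_*$ and using $\pi_*\pi^*\mathcal M=\mathcal M$, $R^1\pi_*\pi^*\mathcal M=0$ forces $R^1\pi_*\mathcal G_1=\coker(\phi)=0$. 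The obstruction to realizing a prescribed $\phi$ by such an extension is the composite of $\phi$ with the $k$-invariant of the two-term complex $R\pi_*\mathcal F$, an element of $\Ext^2_{\A^{n-1}}(\mathcal M,\pi_*\mathcal F)$; making this vanish for a suitable $\phi$ — necessarily by choosing $\mathcal M$ cleverly (built from pushforwards $g_!\Lambda_W$ along morphisms from affine schemes for which the relevant $\Ext^{\ge 2}$ is cut down by Artin vanishing) and by peeling off the locus in $\A^{n-1}$ where base change for $R\pi_*$ degenerates through a secondary induction on the dimension of supports, correcting $\mathcal G_1$ by a recollement along the preimage of that locus — is the technical heart of the argument.

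The step I expect to be the main obstacle is exactly this last one: achieving fibrewise acyclicity $R^1\pi_*\mathcal G_1=0$. Beyond the $\Ext^2$-obstruction, the difficulty is that $R\pi_*$ commutes with base change only over a dense open of $\A^{n-1}$, so fibrewise acyclicity is cheap generically but the special fibres must be corrected by hand, which is what drives the secondary induction and the recollement along the bad locus. This is also where positive characteristic bites and where one cannot simply transpose Nori's transcendental argument: the Swan conductors of $\mathcal F$ along the fibres of $\pi$, and with them the rank and support of $R^1\pi_*\mathcal F$ and the locus where base change degenerates, jump along closed subsets of $\A^{n-1}$ (Deligne--Laumon semicontinuity), so keeping the fibral geometry uniform may require resolving, or taking de Jong alterations of, the relevant subvarieties. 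The remaining ingredients — the reductions, the base case, and the implication (i)$\,\wedge\,$(ii)$\Rightarrow$acyclic — are routine.
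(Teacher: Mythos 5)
Your base case and your reduction of the inductive step to ``embed $\mathcal F$ into $\mathcal G_1$ with $R^1\pi_*\mathcal G_1=0$, then push out along an acyclic extension of $\pi_*\mathcal G_1$'' are sound, and they do track the actual structure of the argument (the paper's Proposition 3.2 produces exactly an embedding $\mathcal F\hookrightarrow\mathcal C$ with $R\pi_*\mathcal C=0$). But the proof has a genuine gap at the point you yourself flag as the main obstacle: you never actually produce $\mathcal G_1$. The $\Ext^2$-obstruction calculus, the ``secondary induction on the dimension of supports,'' the recollement along the locus where base change degenerates, and the possible recourse to alterations are a list of hoped-for tools, not an argument; in particular nothing you write controls the special fibres of $R^1\pi_*$, which is precisely where wild ramification makes the characteristic-$p$ case hard. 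So the proposal is an incomplete proof, not a proof.

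The paper's mechanism for closing exactly this gap is worth contrasting with your plan. First, one does \emph{not} fight the bad locus on the base at all: one precomposes $\mathcal F$ with an automorphism $g=ah$ of $\A^{n+1}$, where $h(x_1,\ldots,x_n,s)=(x_1+s^2,x_2,\ldots,x_n,s)$ is a quadratic shear and $a$ is linear (Theorem 2.1, a refinement of Achinger). A singular-support transversality argument (Beilinson, Saito) then shows that after this change of coordinates the base change map $\overline i_s^{\,*}j_*g^*\mathcal F\to j_{s*}i_s^*g^*\mathcal F$ along the divisor at infinity is an isomorphism for \emph{every} geometric point $s$ of $\A^n$ (Lemma 3.1) --- this is the equisingularity at infinity that is automatic over $\CC$ and that you correctly identify as failing in characteristic $p$. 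Second, with this uniform base change in hand (and after arranging $\mathcal F|_Y=0$ with $Y$ finite surjective over the base, so that $i_s^*\mathcal F$ has no sections on any fibre), the extension you want is produced \emph{geometrically} rather than by killing an obstruction class: setting $\mathcal B:=\ker\bigl(p_1^*\mathcal F\to\Delta_*\mathcal F\bigr)$ on $\A^{n+2}$ and $\mathcal C:=H^1p_{2*}\mathcal B$, one gets a canonical exact sequence $0\to\mathcal F\to\mathcal C\to\pi^*R^1\pi_*\mathcal F\to0$ with $R\pi_*\mathcal C=0$; the concentration of $p_{2*}p_1^*\mathcal F$ in degree $1$, which makes $\delta$ injective and the $\Ext^2$ issue moot, is exactly what the fibrewise base change isomorphism buys. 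In short: the two missing ideas are the quadratic automorphism plus singular support to get equisingularity at infinity, and Nori's diagonal construction to realize the extension without an obstruction; your outline supplies neither.
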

It is in the proof of theorem~\ref{th:NoriAn} that Nori uses the assumption
$p=0$. Precisely, the proof goes by induction on $n$.
Given a constructible sheaf $\mathcal F$ on $\A^n$, we can assume that the
projection $\A^n\to\A^{n-1}, (x_1,..,x_n)\mapsto (x_1,..,x_{n-1})$, becomes
finite on the singular locus $Y$ of $\mathcal F$.
The key fact used by Nori is that then $\mathcal F$ is equisingular along
$x_n=\infty$: the reason for this is that if $k=\CC$, so that we can use the
classical topology, then for any ball $U\subset\A^{n-1}$ and punctured disc
$D^\circ$ in $\A^1$ around $\infty$, a local system on $U\times D^\circ$ is
the same as a local system on $D^\circ$.
This assertion is in no sense valid in case $p>0$ due to the effect of wild
ramification. We show that, by a slight elucidation of Achinger’s result
\cite[3.6]{Achinger}, the above equisingularity is achieved after turning
$\mathcal F$ by an appropriate quadratic automorphism of $\A^n$.
This is enough to make the rest of Nori’s argument work for $p>0$.

I wish to thank P.\;Achinger, M.\;Nori and T.\;Saito for valuable
suggestions and advice;
I especially wish to thank A.\;Beilinson for years of kind instruction.

\section{On a theorem of Achinger}\label{sec:Achinger}
We explain a version of Achinger's theorem~\cite[3.6]{Achinger}.
The constructions of the proof are Achinger's original ones;
we streamline his argument (avoiding the use of a theorem of Deligne-Laumon)
and obtain a precise description of the singular support.\footnote{For the notion of singular support, see Beilinson~\cite{Beilinson} and
Saito~\cite{Saito}.}
In this section, the coefficient ring $\Lambda$ is finite.

Let $\pi:\A^{n+1}\ra\A^n,\overline\pi:\A^n\times\PP^1\ra\A^n,j:\A^{n+1}\hookrightarrow\A^n\times\PP^1$
be the evident projections and open embedding;
set $D:=\A^n\times\{\infty\}=\A^n\times\PP^1\minus\A^{n+1}$.
Let $\mathcal F\in D(\A^{n+1})$ be a constructible complex and $Y$ a closed
subset of $\A^{n+1}$, $Y\neq\A^{n+1}$, such that $\mathcal F$ is locally
constant\footnote{i.e. all cohomology sheaves of $\mathcal F$ are locally
constant.} on $\A^{n+1}\minus Y$.
\begin{theorem}\label{th:AchingerAn}
	If $k$ is infinite, then one can find an automorphism $g$ of $\A^{n+1}$ 
	such that
	\begin{enumerate}[label=(\roman*)]
		\item the restriction of $\pi$ to $g^{-1}(Y)$ is finite, and
		\item the restriction of $SS(j_!g^*\mathcal F)$ to the complement of
		$g^{-1}(Y)$ is either empty (if $\mathcal F$ is supported on $Y$) or
		is the union of the zero section and the conormal to $D$.
	\end{enumerate}
	If $k$ is finite, then one can find $g$ as above after a finite extension 
	of $k$.
\end{theorem}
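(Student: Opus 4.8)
The plan is to realize $g$ as a composite $g=g_0\circ g_c$: a generic linear automorphism $g_0$ that forces (i), followed by a ``quadratic shear'' $g_c$ in the fibre direction whose only purpose is to make the wild ramification of $\mathcal F$ along $D$ equisingular over $\A^n$. Everything apart from the equisingularity claim is bookkeeping with singular supports.

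First I would dispose of (i) together with its geometric consequence. Since $Y\ne\A^{n+1}$ we have $\dim Y\le n$, so a generic linear automorphism $g_0$ of $\A^{n+1}$ makes $\pi|_{g_0^{-1}(Y)}$ finite (if $k$ is finite, first extend scalars so that enough generic choices are available). Replacing $\mathcal F$ and $Y$ by $g_0^*\mathcal F$ and $g_0^{-1}(Y)$, we may assume $\pi|_Y\colon Y\ra\A^n$ is finite, hence proper; since $\overline\pi\colon\A^n\times\PP^1\ra\A^n$ is separated, $Y$ is then closed in $\A^n\times\PP^1$, and being contained in $\A^{n+1}$ it is disjoint from $D$. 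The same reasoning applies verbatim to $g^{-1}(Y)$ for any later automorphism $g$ with $\pi|_{g^{-1}(Y)}$ finite: it is closed in $\A^n\times\PP^1$ and disjoint from $D$. Off $D$ the functor $j_!$ is the identity, so on $\A^{n+1}$ one has $SS(j_!g^*\mathcal F)=SS(g^*\mathcal F)$, which over $\A^{n+1}\minus g^{-1}(Y)$ — where $g^*\mathcal F$ is locally constant — is the zero section, and is empty if $\mathcal F$ is supported on $Y$. As $g^{-1}(Y)$ is disjoint from $D$, the complement of $g^{-1}(Y)$ in $\A^n\times\PP^1$ contains all of $D$, and what remains is to arrange that over $D$ the singular support of $j_!g^*\mathcal F$ consists only of the zero section and the conormal bundle $T^*_D(\A^n\times\PP^1)$. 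Here the conormal bundle is automatically present when $\mathcal F$ is not supported on $Y$ — near a generic point of $D$, $j_!g^*\mathcal F$ is then a nonzero locally constant complex extended by zero across the smooth divisor $D$, and such a sheaf has $T^*_D(\A^n\times\PP^1)$ in its singular support — and automatically absent when $\mathcal F$ is supported on $Y$, since then $j_!g^*\mathcal F$ is supported on $g^{-1}(Y)$. So the real content of (ii) is the \emph{absence over $D$} of any further component.

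The heart of the matter is Achinger's construction. I would take $g_c\colon(y_1,\dots,y_n,t)\mapsto(y_1+c_1t^2,\dots,y_n+c_nt^2,t)$ with $(c_1,\dots,c_n)$ generic (after a finite extension if $k$ is finite), and set $g=g_0\circ g_c$. A direct computation shows that $g_c$ keeps $\pi$ finite on $g_c^{-1}(Y)$: substituting $y_i=z_i+c_it^2$ into a monic-in-$t$ equation for $Y$ over $k[y]$ yields, for generic $c$, an equation that is again monic-in-$t$ up to a unit over $k[z]$, so (i) persists for $g$. The crucial feature is that $g_c$ does \emph{not} extend to $\A^n\times\PP^1$: near $D$ it samples $\mathcal F$ along the curves $t\mapsto(a+c t^2,t)$, which tend to the point at infinity $[c_1:\dots:c_n:0:0]$ of $\A^{n+1}$ with the base point $a$ entering only as a perturbation of order $t^{-2}$ relative to the leading term. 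The point — this is the elucidation of \cite[3.6]{Achinger} — is that a perturbation of that order is invisible to the ramification of $\mathcal F$ at infinity (to the upper-numbering, resp.\ Abbes--Saito, ramification filtration, and in particular to the Swan conductor), so that $g_c^*\mathcal F$, restricted to a henselisation of $\A^n\times\PP^1$ at any point $(a,\infty)\in D$, has ramification along $D$ independent of $a$; in the Artin--Schreier case $\mathcal F=\mathcal L_\psi(f)$ this is transparent, as $g_c^*\mathcal F=\mathcal L_\psi\!\bigl(f(y+ct^2,t)\bigr)$ and the new leading-in-$t$ term has constant coefficient. Granting this equisingularity, Saito's local description of the singular support of a $j_!$-extension across a smooth divisor, in terms of the ramification along it, gives that over $D$ the only component of $SS(j_!g^*\mathcal F)$ besides the zero section is $T^*_D(\A^n\times\PP^1)$, with multiplicity the now constant Swan conductor at infinity. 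Combined with the previous paragraph this is (ii); replacing the semicontinuity theorem of Deligne--Laumon by this direct appeal to the theory of the singular support is what ``streamlines'' Achinger's argument and what pins down the precise description.

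I expect the fourth step to be the main obstacle, in two parts: (a) showing that a perturbation of the indicated order leaves the ramification filtration along $D$ unchanged — the substance of \cite[3.6]{Achinger}, which beyond the Artin--Schreier case requires genuine care with wild ramification and with $\charac k$ dividing the relevant degrees, so that a plain quadratic shear may have to be replaced by a more general quadratic automorphism; and (b) Saito's formula for $SS(j_!(-))$ near a smooth divisor, including the multiplicity-one occurrence of $T^*_D(\A^n\times\PP^1)$. By comparison, the linear and quadratic coordinate changes, the properness bookkeeping, and the reduction of the finite-field case to a finite extension of $k$ are routine.
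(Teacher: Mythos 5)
Your overall architecture (quadratic shear in the fibre direction composed with a generic automorphism, with (i) as routine bookkeeping and the content concentrated in the behaviour over $D$) matches the paper's, but the step you yourself flag as the main obstacle is a genuine gap, and the way you propose to fill it is not the paper's mechanism and does not obviously work. Your claim that a perturbation of order $t^{-2}$ of the base point is ``invisible to the ramification of $\mathcal F$ at infinity'' is transparent only for Artin--Schreier sheaves; for a general wildly ramified $\mathcal F$ it is exactly the kind of statement that Deligne--Laumon semicontinuity (which you say you are avoiding) or the full Abbes--Saito theory is needed to control, and the further passage from ``ramification along $D$ independent of $a$'' to the precise statement that $SS(j_!g^*\mathcal F)|_D$ is the zero section union the conormal is not a citable off-the-shelf result. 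The paper avoids ramification filtrations and Swan conductors entirely. Its mechanism is geometric: the shear $h:(x_1,\dots,x_n,s)\mapsto(x_1+s^2,x_2,\dots,x_n,s)$ extends to $\overline h:\A^n\times\PP^1\to\PP^{n+1}$ \emph{contracting all of $D$ to a single point} $c=(0:1:0:\cdots:0)$; one then lifts $\overline h$ near $D$ to a map $\kappa$ into a chart $W$ at $c$ obtained by extracting a square root of a coordinate, and the assertion over $D$ becomes the statement that $\kappa$ is properly $SS(\sigma^*j'_!\mathcal F)$-transversal, which by Saito's pullback theorem \cite[8.15]{Saito} gives $SS(\kappa^*\sigma^*j'_!\mathcal F)=\kappa^\circ SS(\sigma^*j'_!\mathcal F)$, the union of the zero section and the conormal to $D$. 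Transversality at the single point $0=\sigma^{-1}(c)$ is a finite-dimensional linear-algebra condition: the fibre $SS(\sigma^*j'_!\mathcal F)_0$ must be a finite union of lines (guaranteed generically by Beilinson's bound $\dim SS=n+1$) and the image $\tau$ of the normal direction $\partial_t$ to $D$ must avoid the finitely many orthogonal hyperplanes.

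This shows the second problem with your sketch: the genericity sits in the wrong place. Varying the shear coefficients $(c_1,\dots,c_n)$ only moves the limit point along the hyperplane at infinity; what must be arranged are the conditions (b) and (c) above on the fibre of the singular support at that point \emph{in the square-root chart $W$}, and the paper achieves this by composing with a generic \emph{linear} automorphism $a\in G$, using the transitivity of $G$ on an open subset of $\PP(TW)|_{D'}$ — a verification your proposal never makes. Finally, your hedge that ``a plain quadratic shear may have to be replaced'' in bad characteristic understates a real issue: in characteristic $2$ the chart $\sigma$ involves $z\mapsto z^2$ and the transversality argument breaks down; the paper needs a separate argument there, replacing $\mathcal F$ by $\mathcal F'=F_*\mathcal F$ for the inseparable map $F=h\Fr_1h^{-1}$ and running the transversality argument for $\mathcal F'$. (A small further slip: the singular support carries no multiplicities — the Swan-conductor multiplicity you mention belongs to the characteristic cycle.)
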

\begin{proof}
Let $x_1,\ldots,x_n$ and $s$ be the linear coordinates on $\A^n$ and
$\A^1$, and $s:t$ be the homogeneous coordinates on $\PP^1$, so that
$\overline\pi(x_1,\ldots,x_n,(s:t))=(x_1,\ldots,x_n)$ and 
$j(x_1,\ldots,x_n,s)=(x_1,\ldots,x_n,(s:1))$. Let
$(y_0:y_1:\ldots:y_n:y_{n+1})$ be the homogeneous coordinates on $\PP^{n+1}$
and $j':\A^{n+1}\hookrightarrow\PP^{n+1}$ be the embedding
$(x_1,\ldots,x_n,s)\mapsto(1:x_1:\ldots:x_n:s)$; let $\overline Y$ be
the closure of $Y$ in $\PP^{n+1}$.
If $\mathcal F$ is supported on $Y$, we can take for $g$ any linear
transformation such that $g(0:\ldots:0:1)\not\in\overline Y$.
So we assume that $\mathcal F$ is generically nonzero.
Our $g$ will be of the form $g=ah$, where $a$ is a linear automorphism of
$\A^{n+1}$, and $h$ is the automorphism
$(x_1,\ldots,x_n,s)\mapsto(x_1+s^2,x_2,\ldots,x_n,s)$.
Notice that $h$ extends to a map
\begin{equation*}
	\overline h:\A^n\times\PP^1\ra\PP^{n+1}\qquad
	(x_1,\ldots,x_n,(s,t))\mapsto(t^2:t^2x_1+s^2:t^2x_2:\ldots t^2x_n:st)
\end{equation*}
sending $D$ to $c:=(0:1:0:\ldots:0)\in\PP^{n+1}$.
Let $W$ be a vector space with coordinates $z,x_2,\ldots,x_n,s$ and $\sigma$
be the map
\begin{equation*}
	\sigma:W\ra\PP^{n+1}\minus(y_1=0)\qquad
	(z,x_2,\ldots,x_n,s)\mapsto(z^2:1:x_2:\ldots:x_n:s);
\end{equation*}
note that $\sigma(0)=c$.

\textsc{Step 1.} We formulate conditions (a)–(c) on $\mathcal F$ and prove that they
assure that our theorem holds with $g=h$. The cases $p\neq 2$ and $p=2$ are 
considered separately.

\emph{Case} $p\neq 2$: Our conditions are
(a) $c\not\in\overline Y$,
(b) the fiber $SS(\sigma^*j'_!\mathcal F)_0$ has dimension 1,
i.e. $SS(\sigma^*j'_!\mathcal F)_0$ is the union of a finite nonempty set of 
lines, and
(c) $\tau:=(1,0,\ldots,0,1)\in T_0W=W$ does not lie in the union of 
hyperplanes orthogonal to the lines from (b).

Condition (a) implies that $h^{-1}(Y)=\overline h\,^{-1}(\overline Y)$ 
is closed in $\A^n\times\PP^1\ra\A^n$, so that (i) of 
theorem~\ref{th:AchingerAn} holds. Let us check (ii).
We need to show that the restriction of
$SS(j_!h^*\mathcal F)$ to $D$ is the conormal to $D$. It is enough to
check our claim replacing $\A^n\times\PP^1$ by an étale neighborhood $V$
of $D$; we choose $V$ to be an étale covering of
$\A^n\times\PP^1\minus((s=0)\cup(t^2x_1+s^2=0))$ obtained by adding
$v,v^2=1+(t/s)^2x_1$, to the ring of functions; the embedding
$D\hookrightarrow V$ is $t\mapsto0,v\mapsto1$. The restriction of
$\overline h$ to $V$ lifts to a map
\begin{equation*}
	\kappa:V\ra W\qquad
	(x_1,\ldots,x_n,(1,t),v)\mapsto
	(t/v,(t/v)^2x_2,\ldots,(t/v)^2x_n,t/v^2)
\end{equation*}
sending $D$ to $0\in W$ and the normal vector $\partial_t$ at any point
of $D$ to $\tau$. Now (b) and (c) mean that $\kappa$ is properly
$SS(\sigma^*j_!\mathcal F)$-transversal on a neighborhood of $D$, and so,
by Saito~\cite[8.15]{Saito},\footnote{In the formulation of~\cite[8.15]{Saito}, Saito assumes that $k$ is perfect. This assumption is redundant since singular support is compatible with change of base field by~\cite[1.4\,(iii)]{Beilinson}, and so Saito's assertion amounts to the one for the base change of the data to the perfect closure of $k$.}
$SS(\kappa^*\sigma^*j_!'\mathcal F)=\kappa^\circ SS(\sigma^*j_!'\mathcal F)$,
which is the union of the zero section and the conormal to $D$.
We are done since $\kappa^*\sigma^*j_!'\mathcal F$ is the
pullback of $j_!h^*\mathcal F$ to $V$.

\emph{Case} $p=2$: Consider the purely inseparable map
$\Fr_1:\A^{n+1}\ra\A^{n+1},(x_1,\ldots,x_n,s)\mapsto(x_1^2,x_2,\ldots,x_n,s)$.
Set $F:=h\Fr_1 h^{-1}$ and $\mathcal F':=F_*\mathcal F$, so
$F^*\mathcal F'=\mathcal F$. Our conditions are
(a) $c\not\in\overline Y$,
(b) $\dim SS(\sigma^*j'_!\mathcal F')_0=1$, and
(c) $\tau$ does not lie in the union of hyperplanes orthogonal to the lines in $SS(\sigma^*j_!'\mathcal F')_0$.
Let us check that they imply that theorem~\ref{th:AchingerAn} holds with 
$g=h$. As above, (a) implies (i) of the theorem.
The map $\Fr_1$ extends to a map $\A^n\times\PP^1\ra\A^n\times\PP^1$ also
called $\Fr_1$; notice that $\overline h\Fr_1$ lifts to a map
$\chi:\A^n\times\PP^1\minus((s=0)\cup(tx_1+s=0))\ra W$ given in
coordinates by
\begin{equation*}
	(x_1,\ldots,x_n,(1,t))\mapsto
	\Big(\frac{t}{tx_1+1},\frac{t^2x_2}{t^2x_1^2+1},\ldots,\frac{t^2x_n}{t^2x_1^2+1},\frac{t}{t^2x_1^2+1}\Big).
\end{equation*}
As before, $\chi(D)=0$ and $\chi$ is properly
$SS(\sigma^*j_!'\mathcal F')$-transversal, so
$SS(\chi^*\sigma^*j'_!\mathcal F')=\chi^\circ SS(\sigma^*j'_!\mathcal F')$ is 
again the union of the zero section and the conormal to $D$.
As $\Fr_1^*h^*\mathcal F'=h^{-1}_*\mathcal F=h^*\mathcal F$, we are done.

\textsc{Step 2.}
It remains to find a linear transformation $a$ of $\A^{n+1}$ such 
that $a^*\mathcal F$ satisfies the above conditions (a)–(c).
Let $G\subset\GL(n+1)$ be the group of linear transformations $a$ such that
$a^*(x_1)=x_1$. The action of $G$ on $\PP^{n+1}$ lifts to an action of $G$
on $W$ fixing the coordinate $z$.
Let $D'\subset W$ denote the hyperplane $(z=0)$.
The vector bundle $TW|_{D'}$ over $D'$ decomposes as the direct sum of the
tangent bundle to $D'$ and the normal line bundle $\ker(d\sigma)$;
the action of $G$ preserves this decomposition.
Let $\PP(TW)|^\circ_{D'}$ be the complement in $\PP(TW)|_{D'}$ of the
projectivization of these subbundles. One checks that the action of $G$ on
this open subset is transitive. So, since the point in $\PP(TW)$ that
corresponds to $\tau$ lies in $\PP(TW)^\circ_{D'}$, its $G$-orbit is
Zariski open in $\PP(TW)|_{D'}$.

Let $C\subset T^*W$ be $SS(\sigma^*j_!\mathcal F)$ if $p\neq2$ or
$SS(\sigma^*j_!\mathcal F')$ if $p=2$.
There is a nonempty open subset $Q$ of $D'$ such that every fiber of $C$
over $Q$ is the union of a finite nonempty set of lines.
Indeed, since $\dim C=n+1$ by~\cite{Beilinson}, one can find $Q$ such that
the fibers of $C$ over $Q$ have dimension $\leq1$; their dimension equals 1
since $\sigma^*j_!\mathcal F$, or $\sigma^*j_!\mathcal F'$, is not locally 
constant at the generic point of $D'$ (recall that $\mathcal F$ does not
vanish at the generic point of $\A^{n+1}$).
Shrinking $Q$, we may assume it does not intersect $\overline Y$.
By the above, we can find $a\in G(k)$ such that $a(c)\in Q$ and $a(\tau)$
lies in the complement of the hyperplanes orthogonal to the lines in
the fiber of $C$ at $a(c)$, at least when $k$ is infinite.
When $k$ is finite, we may need to pass to a finite extension of $k$ to find
the promised $a$.
\end{proof}
\begin{remarks}\begin{enumerate}[label=(\roman*)]
\item The assertion of theorem~\ref{th:AchingerAn} and its proof remain 
valid in the setting of holonomic $\mathcal D$-modules.
(The reference to~\cite[8.15]{Saito} should be replaced by
theorem 4.7\,(3) in \cite[\S4.4]{Kashiwara}.)
This answers positively Kontsevich's question from~\cite[\S1.3]{Achinger}.
\item Achinger's theorem~\cite[3.6]{Achinger} asserts that in the case of
locally constant $\mathcal F$, $\overline\pi$ is locally acyclic rel.
$j_!\mathcal F$. This follows immediately from theorem~\ref{th:AchingerAn}.
\end{enumerate}	
\end{remarks}

\section{Adapting Nori's argument to characteristic $p$}
We explain how to modify Nori's proof of the theorems from 
\S\ref{sec:intro} to make it work in any characteristic.
In fact, only one step of the proof of theorem~\ref{th:NoriAn}, which goes
by induction on $n$, requires modification. We present it as
proposition~\ref{prop:Nori2.2} below; it replaces Nori's proposition 2.2.

As in \S\ref{sec:intro}, the coefficient ring $\Lambda$ is either finite or
$\ell$-adic. For a map $?$ between algebraic varieties, we write $?_*$
instead of $R?_*$, etc.

Suppose we are in the situation of \S\ref{sec:Achinger}; we follow the
notation there.
For a geometric point $s$ of $\A^n$ we denote by
$i_s:\A^1=\pi^{-1}(s)\hookrightarrow\A^{n+1},\overline i_s:\PP^1=\overline\pi\,^{-1}(s)\hookrightarrow\A^n\times\PP^1$ the embeddings 
of the $s$-fibers; let $j_s:\A^1\hookrightarrow\PP^1$ be the restriction of
$j$ to the $s$-fibers.
\begin{lemma}\label{lem:Nori1.3A}
	If $k$ is infinite, then
	one can find an automorphism $g$ of $\A^{n+1}$ such that $g^{-1}(Y)$ is
	finite over $\A^n$, and for every $s$ as above the base change morphism
	$\overline i_s^*j_*g^*\mathcal F\ra j_{s*}i_s^*g^*\mathcal F$ is an
	isomorphism.
	If $k$ is finite, then such $g$ exists after a finite extension of $k$.
\end{lemma}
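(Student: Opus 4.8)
The plan is to deduce the lemma from Theorem~\ref{th:AchingerAn} by translating the singular-support conclusion there into the base-change statement required here. First I would invoke Theorem~\ref{th:AchingerAn} to obtain an automorphism $g$ of $\A^{n+1}$ (after a finite extension of $k$ if $k$ is finite) such that $\pi$ restricted to $g^{-1}(Y)$ is finite — which is exactly the first assertion, $g^{-1}(Y)$ finite over $\A^n$ — and such that $SS(j_!g^*\mathcal F)$ away from $g^{-1}(Y)$ is contained in the union of the zero section and the conormal $T^*_D(\A^n\times\PP^1)$ to $D=\A^n\times\{\infty\}$. Replacing $\mathcal F$ by $g^*\mathcal F$, I may assume $g=\id$ from now on, so the task is to show that for every geometric point $s$ of $\A^n$ the base change morphism $\overline i_s^*j_*\mathcal F\ra j_{s*}i_s^*\mathcal F$ is an isomorphism.

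The key step is to reformulate this base-change isomorphism as a local acyclicity statement for $\overline\pi$ relative to $j_!\mathcal F$. Indeed, on $\A^n\times\PP^1$ there is a distinguished triangle relating $j_!\mathcal F$, $j_*\mathcal F$, and the restriction to $D$, and since $\overline\pi$ is proper the formation of $\overline\pi_*$ commutes with the base change to the fiber $\overline\pi^{-1}(s)=\PP^1$; the map in the lemma is an isomorphism for all $s$ precisely when $\overline\pi$ is universally locally acyclic relative to $j_*\mathcal F$ near $D$, equivalently relative to $j_!\mathcal F$ near $D$ (the two differ by $i_{D*}i_D^*$, for which local acyclicity over the finite-over-$\A^n$ locus $g^{-1}(Y)\cap D$ is automatic after the finiteness in (i), and off that locus $\mathcal F$ is locally constant near $D$). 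Then I would apply the criterion — this is precisely the content of Remark~(ii) following Theorem~\ref{th:AchingerAn}, or directly Saito's characterization (\cite[8.11]{Saito} and its relative-to-a-morphism version) — that $\overline\pi$ is locally acyclic relative to $j_!\mathcal F$ at a point where $SS(j_!\mathcal F)$ lies in the union of the zero section and the conormal to a divisor contracted by $\overline\pi$: the map $\overline\pi$ is $SS(j_!\mathcal F)$-transversal there because its relative cotangent directions along $D$ meet $SS(j_!\mathcal F)$ only in the zero section (the conormal to $D$ is, tautologically, killed by $d\overline\pi$ so contributes no obstruction), and $SS$-transversality of a morphism is equivalent to universal local acyclicity by Saito's microlocal criterion.

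Assembling these: off $g^{-1}(Y)$ the sheaf is locally constant on a neighborhood of $D$ in $\A^n\times\PP^1$, so $\overline\pi$ is trivially locally acyclic relative to $j_!\mathcal F$ there by smooth base change; over the points of $D$ lying in $g^{-1}(Y)$, finiteness of $\pi|_{g^{-1}(Y)}$ together with the singular-support bound from Theorem~\ref{th:AchingerAn}(ii) gives $SS$-transversality of $\overline\pi$ with respect to $j_!\mathcal F$ in a neighborhood of $D$, hence universal local acyclicity there. Combining the two cases yields that $\overline i_s^*j_*\mathcal F\ra j_{s*}i_s^*\mathcal F$ is an isomorphism for every $s$, as desired; the finite-field case follows by the same argument after the extension of $k$ furnished by Theorem~\ref{th:AchingerAn}.

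I expect the main obstacle to be the bookkeeping at the boundary divisor $D$: one must be careful that ``$SS$ is the union of the zero section and the conormal to $D$'' really does force $\overline\pi$-transversality (the conormal to $D$ is the one direction where $\overline\pi$ degenerates, so one has to check this conormal causes no failure of transversality — which it does not, since transversality of a morphism $f$ only tests pairings of $SS$ with $\ker(df)$, and for the contracted divisor $D$ that pairing is trivially zero), and that the passage between local acyclicity relative to $j_!\mathcal F$ and the stated base-change morphism for $j_*\mathcal F$ is handled cleanly using properness of $\overline\pi$ and the excision triangle. None of this is deep given Theorem~\ref{th:AchingerAn} and Saito's theory, but it is the step where an error would most easily creep in.
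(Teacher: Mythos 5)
Your overall strategy --- obtain $g$ from theorem~\ref{th:AchingerAn} and convert its singular-support conclusion into the required base change isomorphism --- is the right one, but the conversion step is where your argument has a genuine gap, and it is not the route the paper takes. You assert that the base change morphism $\overline i_s^*j_*\mathcal F\ra j_{s*}i_s^*\mathcal F$ is an isomorphism for all $s$ ``precisely when'' $\overline\pi$ is universally locally acyclic relative to $j_!\mathcal F$ (equivalently to $j_*\mathcal F$) near $D$. Neither half of this is justified. Local acyclicity of $\overline\pi$ at a point $(s,\infty)$ of $D$ compares the stalk there with the cohomology of \emph{nearby} Milnor fibers $V_{\overline t}$, for $\overline t$ a geometric point of the strict henselization of $\A^n$ at $s$; the lemma instead compares the stalk of $j_*\mathcal F$ (the cohomology of an $(n{+}1)$-dimensional punctured neighborhood) with the cohomology of the punctured \emph{special} fiber $\PP^1_{(\infty)}\minus\{\infty\}$. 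These are different comparisons, and passing from one to the other is not formal. Likewise, the claimed equivalence of local acyclicity rel.\ $j_!\mathcal F$ and rel.\ $j_*\mathcal F$: the two differ by $i_{D*}i_D^*j_*\mathcal F$, and since $\overline\pi|_D$ is an isomorphism, local acyclicity of $\overline\pi$ relative to that term amounts to $i_D^*j_*\mathcal F$ being locally constant on $D$ --- which is essentially the equisingularity one is trying to prove, not something automatic. (Your parenthetical about ``the finite-over-$\A^n$ locus $g^{-1}(Y)\cap D$'' does not parse: $g^{-1}(Y)$ is closed in $\A^{n+1}$ and finite over $\A^n$, so its closure misses $D$ entirely.)

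The paper's proof uses a different microlocal input. It applies theorem~\ref{th:AchingerAn} to the Verdier dual $D\mathcal F$ rather than to $\mathcal F$ --- this matters, since $SS(D\mathcal F)\neq SS(\mathcal F)$ in positive characteristic --- observes that the resulting $SS(j_!g^*D\mathcal F)$ (zero section plus conormal to $D$) is transversal to the \emph{fiber inclusions} $\overline i_s$ rather than invoking transversality of the projection $\overline\pi$, and then uses \cite[8.13]{Saito} (namely $\overline i_s^!=\overline i_s^*(-n)[-2n]$ on complexes transversal to $\overline i_s$), the compatibility of $j_!$ with $*$-pullback, and the exchange of $j_!$ and $j_*$ under duality to identify $\overline i_s^*j_*g^*\mathcal F$ with $j_{s*}i_s^*g^*\mathcal F$ after dualizing. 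If you want to salvage a local-acyclicity argument you must supply the (nontrivial) bridge from ULA of $\overline\pi$ to base change along the closed fibers; the duality route avoids this entirely. Finally, you do not address coefficients: theorem~\ref{th:AchingerAn} and the singular support formalism are available for finite $\Lambda$ only, whereas the lemma is also needed $\ell$-adically; the paper reduces the $\ell$-adic case to the finite case by checking that the morphism in question is an isomorphism iff it becomes one after applying $\Lotimes\ZZ/\ell$. That reduction is missing from your proposal.
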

\begin{proof}
	Assume for the moment that $\Lambda$ is finite.
	Our $g$ is the automorphism from theorem~\ref{th:AchingerAn} applied to 
	the Verdier dual $D\mathcal F$ of $\mathcal F$. We check our claim on 
	the complement of $g^{-1}(Y)$, which is a neighborhood of the divisor 
	$D$. By~\eqref{th:AchingerAn}, $\overline i_s$ is
	$SS(j_!g^*D\mathcal F)$-transversal there, and so one has
	\begin{multline*}
		\overline i_s^!(j_!g^*D\mathcal F)
		=\overline i_s^*(j_!g^*D\mathcal F)(-n)[-2n] \\
		=j_{s!}i_s^*(g^*D\mathcal F)(-n)[-2n]
		=j_{s!}D(i_s^*g^*\mathcal F)=D(j_{s*}i_s^*g^*\mathcal F);
	\end{multline*}
	here the first equality comes from~\cite[8.13]{Saito}.
	Applying $D$ to both sides, we are done.
	
	The case of $\ell$-adic coefficients follows from that of finite
	coefficients. Indeed, for $\mathcal F\in D(\A^{n+1},R_E)$ the assertion
	of the lemma for $\mathcal F\Lotimes\ZZ/\ell$ amounts to the one for
	$\mathcal F$ (with the same $g$ and $Y$),\footnote{To see this, notice
	that $\mathcal G\in D(X,R_E)$ vanishes iff $\mathcal G\Lotimes\ZZ/\ell=0$.
	Thus, a morphism $\mathcal K\ra\mathcal L$ in $D(X,R_E)$ is an 
	isomorphism iff
	$\mathcal K\Lotimes\ZZ/\ell\ra\mathcal L\Lotimes\ZZ/\ell$
	is an isomorphism (since being an isomorphism amounts to the vanishing
	of the cone). Now apply this remark to the morphism from the lemma.}
	and implies the one for $\mathcal F\otimes\QQ_\ell$.
\end{proof}
Henceforth $\mathcal F$ is a constructible sheaf (not a complex).
Replacing $\mathcal F$ by $g^*\mathcal F$, we assume 
lemma~\ref{lem:Nori1.3A} holds for $g=\id$. Enlarging $Y$ (so that it is
still finite over $\A^n$) we assume that $\pi(Y)=\A^n$.
Replacing $\mathcal F$ by its extension by zero from $\A^{n+1}\minus Y$,
we assume $\mathcal F|_Y=0$.
\begin{proposition}\label{prop:Nori2.2}
	Under the above conditions, one can find an embedding of constructible
	sheaves $\delta:\mathcal F\hookrightarrow\mathcal C$ such that
	$\pi_*\mathcal C=0$.
\end{proposition}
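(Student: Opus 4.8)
The plan is to build $\mathcal{C}$ on $\A^n\times\PP^1$ rather than directly on $\A^{n+1}$, exploiting the fact that after Lemma~\ref{lem:Nori1.3A} the sheaf $\mathcal{F}$ is equisingular along $D$. Concretely, I would first form $\overline{\mathcal{F}}:=j_*\mathcal{F}$ on $\A^n\times\PP^1$. By Lemma~\ref{lem:Nori1.3A} the formation of $j_*\mathcal{F}$ commutes with restriction to every fiber $\overline\pi^{-1}(s)$, so $\overline\pi_*\overline{\mathcal{F}}$ has fibers $R\Gamma(\PP^1, j_{s*}i_s^*\mathcal{F})$; since $\mathcal{F}|_Y=0$ and $Y$ is finite over $\A^n$, the sheaf $i_s^*\mathcal{F}$ is a constructible sheaf on $\A^1$ that is lisse outside a finite set, and $j_{s*}$ of it on $\PP^1$ has cohomology concentrated in degrees $0$ and $1$. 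The key point is that $\overline\pi$ is a proper morphism, so $\overline\pi_*$ preserves constructibility and commutes with base change, whence $R^q\overline\pi_*\overline{\mathcal{F}}$ is a constructible sheaf on $\A^n$ for each $q$ and vanishes for $q\neq 0,1$. Because $\A^n$ is affine (and we are over an algebraically closed field), these sheaves can be killed cohomologically by Theorem~\ref{th:NoriAn} applied in dimension $n$ — this is exactly the inductive hypothesis.

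Next I would arrange that $\pi_*\mathcal{F}$ itself, not just $\overline\pi_*\overline{\mathcal{F}}$, is controlled. We have the exact triangle relating $j_!\mathcal{F}$, $j_*\mathcal{F}$, and the restriction to $D$: writing $k_D\colon D\hookrightarrow \A^n\times\PP^1$ for the closed embedding, there is a triangle $j_!\mathcal{F}\to j_*\mathcal{F}\to k_{D*}k_D^*j_*\mathcal{F}\to$. Applying $\overline\pi_*$ and using $\pi=\overline\pi\circ j$ on the open part gives a triangle $\pi_*\mathcal{F}\to \overline\pi_*j_*\mathcal{F}\to (\overline\pi\circ k_D)_*\,k_D^*j_*\mathcal{F}\to$, and $\overline\pi\circ k_D\colon D\xrightarrow{\sim}\A^n$ is an isomorphism. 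So $\pi_*\mathcal{F}$ sits in a triangle whose other two terms are a complex with constructible cohomology in degrees $0,1$ and a single constructible sheaf $\mathcal{E}:=k_D^*j_*\mathcal{F}$ on $\A^n$ (placed in degree $0$). Thus $\pi_*\mathcal{F}$ has constructible cohomology in degrees $0,1$ only, and I want to embed $\mathcal{F}$ into a sheaf that kills all of it.

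The construction of $\mathcal{C}$ then goes as follows. Using the inductive hypothesis (Theorem~\ref{th:NoriAn} over $\A^{n-1}$, or rather the affine space $\A^n$ here), choose constructible sheaves on $\A^n$ into which the relevant $R^q$'s embed acyclically, pull them back along $\overline\pi$, twist by an ample line bundle's worth of sections of $\mathcal{O}(\infty)$ on the $\PP^1$-factor to move supports onto $D$ and thereby into the kernel of $i_s^*$, and take $\mathcal{C}$ to be an extension of such a pullback-and-twist sheaf, restricted back to $\A^{n+1}$ via $j^*=j^{-1}$, that receives $\mathcal{F}$. More precisely: the natural adjunction $\mathcal{F}\hookrightarrow j^*j_*\mathcal{F}=j^*\overline{\mathcal{F}}$ is already an isomorphism since $j$ is an open embedding, so the real content is to enlarge $\overline{\mathcal{F}}$ on $\A^n\times\PP^1$ to an $\overline{\mathcal{C}}$ with $\overline{\mathcal{F}}\hookrightarrow\overline{\mathcal{C}}$, $\overline\pi_*\overline{\mathcal{C}}=0$, and such that $\overline{\mathcal{C}}$ still restricts to a sheaf (not a complex) on $\A^{n+1}$ with vanishing $\pi_*$; then set $\mathcal{C}:=j^*\overline{\mathcal{C}}$. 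To get $\overline\pi_*\overline{\mathcal{C}}=0$ one builds $\overline{\mathcal{C}}$ as a two-step extension: first an extension $0\to\overline{\mathcal{F}}\to \overline{\mathcal{C}}_1\to \overline\pi^*\mathcal{G}_0\to 0$ with $\mathcal{G}_0$ an $\overline\pi_*$-acyclic sheaf on $\A^n$ chosen so that the connecting map kills $R^1\overline\pi_*\overline{\mathcal{F}}$ (possible because $\A^n$ is affine, so $\Hom(\mathcal{G}_0,R^1\overline\pi_*\overline{\mathcal{F}})\to\Ext^?(\dots)$ surjectivity comes from Theorem~\ref{th:NoriAn}), and then a further such extension to kill $R^0$.

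The step I expect to be the main obstacle is ensuring that the resulting $\overline{\mathcal{C}}$ — a priori only a complex — is concentrated in a single degree, i.e. is an honest constructible sheaf, while simultaneously having $\overline\pi_*\overline{\mathcal{C}}=0$ and controlling the fiberwise behavior so that $\pi_*(j^*\overline{\mathcal{C}})=0$ as well; this is where Nori's original argument is delicate and where the equisingularity along $D$ secured by Lemma~\ref{lem:Nori1.3A} must be used in an essential way, since without it the base-change identity $\overline i_s^*j_*\mathcal{F}=j_{s*}i_s^*\mathcal{F}$ fails and one loses control of $R\overline\pi_*$ on fibers. The remaining bookkeeping — that twisting by $\mathcal{O}_{\PP^1}(N\cdot\infty)$ for $N\gg 0$ both preserves $\overline\pi_*$-acyclicity of pulled-back sheaves (by the projection formula and $R\overline\pi_*\mathcal{O}(N\infty)$ being acyclic for $N\geq 0$) and arranges that the correction terms live in a neighborhood of $D$, hence restrict to zero on the $\A^1$-fibers — is routine once the extension-theoretic core is in place.
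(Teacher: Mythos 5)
Your proposal does not follow the paper's route, and as written it has genuine gaps that are not mere bookkeeping. The paper's construction (Nori's) is short and concrete: on $\A^{n+2}=\A^{n+1}\times_{\A^n}\A^{n+1}$ with projections $p_1,p_2$ and diagonal $\Delta$, set $\mathcal B:=\ker(p_1^*\mathcal F\twoheadrightarrow\Delta_*\mathcal F)$ and $\mathcal C:=H^1p_{2*}\mathcal B$, with $\delta$ the connecting map of the long exact sequence for $p_{2*}$. Two observations make this work: first, $p_{2*}p_1^*\mathcal F=\pi^*\pi_*\mathcal F$ by smooth base change, its stalks are $R\Gamma(\A^1,i_s^*\mathcal F)$ by lemma~\ref{lem:Nori1.3A} plus proper base change, and this complex is concentrated in degree $1$ --- Artin vanishing kills degrees $\geq2$ and, crucially, $H^0$ vanishes because $i_s^*\mathcal F$ is the extension by zero of a local system from the complement of a \emph{nonempty} finite set $Y_s$; second, $\Pi_*\mathcal B=0$ for $\Pi=\pi p_1=\pi p_2$, since $p_{1*}p_1^*\mathcal F\to p_{1*}\Delta_*\mathcal F$ is the identity of $\pi_*\mathcal F$ after applying $\pi_*$. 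The first point shows $\delta$ is injective and $\mathcal C$ is an honest sheaf; the second gives $\pi_*\mathcal C=0$ for free.

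Your sketch is missing this construction and contains steps that cannot work. You never record that $H^0\pi_*\mathcal F=0$, and your plan to kill $H^0\overline\pi_*$ ``by a further such extension'' is impossible: a monomorphism of sheaves induces a monomorphism on $H^0$ of any pushforward, so $H^0$ can only grow under the embeddings you propose; the vanishing must already hold for $\mathcal F$, and it does, for the reason above. The ``extension-theoretic core'' --- producing an extension of a $\overline\pi$-pullback by $\mathcal F$ whose connecting map surjects onto $H^1\pi_*\mathcal F$ while the total object stays a single sheaf --- is precisely the content of the proposition, and you explicitly leave it unresolved as ``the main obstacle''; the diagonal construction is the answer (it yields $0\to\mathcal F\to\mathcal C\to\pi^*H^1\pi_*\mathcal F\to0$ with identity connecting map). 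Twisting by $\mathcal O_{\PP^1}(N\cdot\infty)$ is a coherent-cohomology device with no meaning for constructible $\Lambda$-sheaves, so that part of the plan is vacuous. Finally, theorem~\ref{th:NoriAn} in dimension $n$ plays no role here: it concerns global cohomology on $\A^n$, whereas the statement to prove is the relative vanishing $\pi_*\mathcal C=0$, which the diagonal construction delivers directly without induction.
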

\begin{proof}
	Nori provides a natural construction of the sheaf $\mathcal C$ and the
	arrow $\delta$. Let $p_1,p_2:\A^{n+2}\ra\A^{n+1}$ denote the
	canonical projections.
	We define a sheaf $\mathcal B$ on $\A^{n+2}$ via the exact sequence
\begin{equation*}\label{eq:defB}\tag{$\dagger$}
	0\lra\mathcal B\lra p_1^*\mathcal F\lra\Delta_*\mathcal F\lra 0.
\end{equation*}
Applying $p_{2*}$ to this sequence, the long exact sequence of
cohomology gives the arrow
\begin{equation*}
	\delta:\mathcal F
	=p_{2*}\Delta_*\mathcal F\ra H^1p_{2*}\mathcal B=:\mathcal C.
\end{equation*}
Let us check the promised properties. One has
$p_{2*}p^*_1\mathcal F\xleftarrow\sim\pi^*\pi_*\mathcal F$ by smooth base
change. Therefore for every geometric point
$t$ of $\A^{n+1}$, putting $s:=\pi(t)$ one has
\begin{equation*}
	(p_{2*}p_1^*\mathcal F)_t=(\pi_*\mathcal F)_s
	=(\overline\pi_*j_*\mathcal F)_s=R\Gamma(\A^1,i^*_s\mathcal F),
\end{equation*}
the last equality following from proper base change and 
lemma~\ref{lem:Nori1.3A}. By Artin's theorem, the latter complex is acyclic
off degrees 0 and 1; as $i^*_s\mathcal F$ is the extension by 
zero of a locally constant sheaf from the complement of a nonempty finite
set $Y_s$, its degree 0 cohomology vanishes, and the complex
$p_{2*}p_1^*\mathcal F$ is in fact concentrated in degree 1.
This implies that $p_{2*}$ transforms~\eqref{eq:defB} into a distinguished
triangle whose long exact sequence of cohomology reduces to the short exact
sequence of sheaves
\begin{equation*}
	0\lra\mathcal F\xlongrightarrow\delta\mathcal C\lra H^1p_{2*}p_1^*\mathcal F\lra0,
\end{equation*}
showing $\delta$ injective and $p_{2*}\mathcal B$ acyclic off
degree 1, i.e. $\mathcal C=p_{2*}\mathcal B[1]$. If $\Pi:=\pi p_1=\pi p_2$,
\begin{equation*}
	\pi_*\mathcal C=\Pi_*\mathcal B[1]
	=\operatorname{Cone}(\Pi_*p_1^*\mathcal F\lra\Pi_*\Delta_*\mathcal F)=0,
\end{equation*}
and we have the proposition.
\end{proof}
\begin{remark}\label{rem:cd1}
	If $k$ is no longer supposed algebraically closed, but of cohomological
	dimension $\leq1$, the proof of theorem~\ref{th:NoriAn} gives that if
	$\mathcal F$ is a constructible sheaf on $\A^n$, there exists
	a monomorphism $\mathcal F\hookrightarrow\mathcal G$ into a
	constructible $\mathcal G$ inducing the null morphism
	$H^q(\A^n,\mathcal F)\ra H^q(\A^n,\mathcal G)$ for $q>0$, and
	theorem~\ref{th:derived} continues to hold over such a $k$.
\end{remark}


\begin{thebibliography}{SGA4}
	\bibitem[A]{Achinger} P. Achinger, Wild ramification and $K(\pi,1)$ spaces, \emph{Invent. math.,} \textbf{210} (2017), 453–499.
	\bibitem[B]{Beilinson} A. Beilinson, Constructible sheaves are holonomic, \emph{Sel. Math. New Ser.} \textbf{22} (2016), 1797–1819.
	\bibitem[K]{Kashiwara} M. Kashiwara, $D$-modules and microlocal calculus, \emph{Translations of Mathematical Monographs,} \textbf{217}, American Math. Soc., 2003.
	\bibitem[N]{Nori} M. Nori, Constructible Sheaves, in \emph{Algebra, arithmetic, and geometry, Parts I and II,} Mumbai, 2000, ed. R.~Parimala, \emph{Tata Inst. Fund. Res. Stud. Math.} \textbf{16} (2002), 471–491, \href{https://arxiv.org/abs/1808.02417}{\footnotesize\tt arXiv:1808.02417\;[math.AG]}.
	\bibitem[S]{Saito} T. Saito, The characteristic cycle and the singular support of a constructible sheaf, \emph{Invent. math.,} \textbf{207} (2017), 597–695.
	\bibitem[SGA4]{SGA4} Théorie des topos et cohomologie étale des schémas, ed. M.~Artin, A.~Grothendieck and J.-L.~Verdier, \emph{Lecture Notes in Math.,} \textbf{269}, \textbf{270}, \textbf{305}, Springer-Verlag, 1972–1973.
\end{thebibliography}
\end{document}